\documentclass{article}
\usepackage[a4paper, margin=2.5cm]{geometry} 
\usepackage[medium]{titlesec}

\usepackage{amssymb} 
\usepackage{amsmath}
\usepackage{amsthm}

\makeatletter
\newcommand*\owedge{\mathpalette\@owedge\relax}
\newcommand*\@owedge[1]{%
  \mathbin{%
    \ooalign{%
      $#1\m@th\bigcirc$\cr
      \hidewidth$#1\m@th\wedge$\hidewidth\cr
    }%
  }%
}
\makeatother

\usepackage[utf8]{inputenc} 
\usepackage[T1]{fontenc} 
\usepackage{lmodern}

\usepackage{bbm}
\usepackage{hyperref}
\usepackage[blocks]{authblk}

\newcommand{\R}{{\mathbb R}}



\newcommand{\rd}{{\mathbbm d}}

\newtheorem{lm}{Lemma}

\newtheorem{prop}[lm]{Proposition}
\newtheorem*{prop*}{Proposition}
\newtheorem{cor}[lm]{Corollary}
\theoremstyle{remark}

\theoremstyle{remark}

\title{Spiraling conformal geodesics}

\author[1]{Wojciech Kami{\'n}ski}
\affil[1]{Faculty of Physics, University of Warsaw,
ul. Pasteura 5, 02-093 Warsaw, Poland}

\begin{document}

\maketitle

\begin{abstract}
In this short note, we construct an example of spiraling conformal geodesic in Euclidean signature in dimension $3$, answering the question posed by Helmuth Friedrich and Paul Tod, if such objects exists. Our example is not real analytic, but similar constructions can lead also to real analytic metrics in arbitrary dimensions.\footnote{We thank Paul Tod and Maciej Dunajski for this observation.}
\end{abstract}

\section{Introduction}

Conformal geodesics \cite{yano2020theory} (also called conformal circles) are special curves that generalizes concept of geodesics from  pseudo-Riemannian geometry to the realm of conformal manifolds. A conformal geodesic for one metric remains conformal geodesic also after conformal transformation. These curves play an important role in general relativity as a tool to construct gaussian type coordinate system which behave nicely under conformal transformations \cite{schmidt1974new}, that is crucial at conformal boundaries of spacetimes.

Due to their importance in the study of initial-boundary value problem for general relativity \cite{schmidt1974new, friedrich1987conformal} and the character of distinguished curves for conformal geometry \cite{bailey1990conformal, bailey1994thomas, gover2018distinguished}, this class of curves attracted some attention. It turned out that despite some similarities, many nice properties of standard geodesics do not hold for conformal geodesics. For example, the variational principle exists only in dimension $3$ \cite{kruglikov2024variationality, kruglikov2025conformal} unless one resort to variational principles with some constraints \cite{dunajski2021variational}. There exists a version of gaussian neighbourhood (so-called heart) introduced in \cite{cameron2024conformal}, but it has a bit weaker properties than for metric geodesics. There are infinitely many conformal geodesics connecting two neighbouring points and there is no notion of `minimal length'.

Most importantly, unlike geodesics, conformal circles can develop various singularities.
Based on many examples, Paul Tod conjectured \cite{tod2012some} that one type of such singularities can be ruled out: that conformal geodesics cannot spiral. Namely, that they cannot converge to a point without reaching it in a finite proper time (we will provide explicite definition later). The conjecture was shown in the case of Einstein metrics \cite{tod2012some}. Other examples considered in \cite{tod2012some} were highly symmetric, but in \cite{dunajski2022conformal} more general manifolds were considered supporting the claim. Additionally, existence of hearts \cite{cameron2024conformal} suggested that spiraling cannot occure (however, see \cite{cameron2024conformal-erratum} why method from \cite{cameron2024conformal} does not exclude spiraling conformal geodesics).

In this paper, we provide an explicit example of a $3$-dimensional Riemannian metric with a spiraling conformal geodesic. Although the example is not real-analytic, it can be modified slightly to yield a real-analytic metric with the same property (private communication by Paul Tod). In this way, the original conjecture is resolved. 

Let us now describe how our example is constructed. In flat Euclidean space, every circle is a conformal geodesic. This looks like a very unstable situation: under small perturbation of the conformal geodesic equation one can achieve that the radius of the circle shrinks slowly. Since every circle is a solution of the equation, the error can be very small if the changes of the radius becomes slower and slower as we approach the origin. The crucial point is that we want these corrections to result from a perturbation of the metric, due to conformal geodesics equation. Here comes the second insight. We can consider a totally geodesic hypersurface with the flat induced metric. For a curve contained in this hypersurface the conformal geodesic equation is the same as the equation on the hypersurface except one term involving the Schouten tensor, which is now the tensor for the bigger space. This observation allows us to utilize the additional dimension to create non-trivial desired correction to the flat metric equation. To make things simple, our hypersurface is $\{z=0\}\subset \R^3$ and we start with proposing a curve (Section \ref{sec:spirals}) and computing corrections which should be implemented in order to make this curve an unparametrized conformal geodesic. For this purpose, we use an alternative equation for unparametrized conformal geodesics (Lemma \ref{lm:uparam}). Then, we propose a metric in $\R^3$ which implements these corrections by a proper form of the Schouten tensor (Section \ref{sec:metric}). 

Our curve approaches the origin very slowly in terms of the proper time. This has the advantage of simplifying certain conditions for the smoothness of the metric, but the drawback is that the resulting metric is not real-analytic. By modifying the curve to a spiral that approaches the origin slightly faster—though not too fast—one can obtain a real-analytic metric.\footnote{Private communication by Paul Tod.}

\section{The example}


We are working always in Riemannian signature, $|\cdot|$ denotes the length of a vector, $\nabla$ is the Levi-Civita covariant derivative and  $L_{\mu\nu}$ is the Schouten tensor.
We will now define main objects:
\begin{enumerate}
\item A {\it spiral} to point $p$ is a curve $\R_+\ni s\rightarrow \gamma(s)$ parametrized by a proper time parameter (velocity has norm one) such that for every open neighbourhood $U$ of $p$
\begin{equation}
\exists s_0\in \R_+\ \forall s>s_0\colon \gamma(s)\in U.
\end{equation}
\item For every two co-vector $H$ we denote $(\hat{H}\vec{u})^\mu=H^{\mu\nu}u_\nu$. A (parametrized by proper time) {\it conformal geodesic} in Riemannian signature is a curve satisfying an equation
\begin{equation}
\nabla_{\vec{u}}\vec{a}=\vec{u}\left(-|\vec{a}|^2-\vec{u}\cdot \hat{L}\vec{u}\right)+\hat{L}\vec{u},
\end{equation}
where $\vec{a}=\nabla_{\vec{u}}\vec{u}$, $\vec{u}$ is the velocity ($|\vec{u}|^2=1$) and $L_{\mu\nu}$ is the Schouten tensor (thus $\hat{L}$ is defined as above).  Importantly, if $I\ni s\rightarrow \gamma(s)$ is a conformal geodesic in proper time parametrization then such is also $I\ni s\rightarrow \gamma(-s)$. 
\item An unparametrized conformal geodesic is a curve $I\ni t \rightarrow \gamma(t)$ such that there exists a smooth reparametrization after which the curve becomes a conformal geodesic parametrized by the proper time.
\end{enumerate}

\subsection{Conformal geodesics}

Although the example can be checked directly, it is simpler to use some equivalent formulation of equations of conformal geodesics.

\begin{lm}
Conformal geodesics equation is equivalent to condition
\begin{equation}\label{eq:wedge-1}
\nabla_{\vec{u}}(\vec{u}\wedge \vec{a})=\vec{u}\wedge \hat{L}\vec{u},\quad |\vec{u}|^2=1.
\end{equation}
\end{lm}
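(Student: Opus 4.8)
The plan is to show equivalence between the conformal geodesic equation
\[
\nabla_{\vec{u}}\vec{a}=\vec{u}\left(-|\vec{a}|^2-\vec{u}\cdot \hat{L}\vec{u}\right)+\hat{L}\vec{u}
\]
and the wedge form \eqref{eq:wedge-1}. First I would expand the left-hand side of \eqref{eq:wedge-1} using the Leibniz rule for $\nabla_{\vec{u}}$ acting on the wedge product:
\[
\nabla_{\vec{u}}(\vec{u}\wedge \vec{a})=(\nabla_{\vec{u}}\vec{u})\wedge \vec{a}+\vec{u}\wedge(\nabla_{\vec{u}}\vec{a})=\vec{a}\wedge\vec{a}+\vec{u}\wedge(\nabla_{\vec{u}}\vec{a}).
\]
Since $\vec{a}\wedge\vec{a}=0$, the left-hand side collapses to $\vec{u}\wedge(\nabla_{\vec{u}}\vec{a})$. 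Thus \eqref{eq:wedge-1} is equivalent to $\vec{u}\wedge(\nabla_{\vec{u}}\vec{a})=\vec{u}\wedge\hat{L}\vec{u}$, i.e. $\vec{u}\wedge(\nabla_{\vec{u}}\vec{a}-\hat{L}\vec{u})=0$.

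Next I would interpret the condition $\vec{u}\wedge\vec{w}=0$ as stating that $\vec{w}$ is proportional to $\vec{u}$. Applying this with $\vec{w}=\nabla_{\vec{u}}\vec{a}-\hat{L}\vec{u}$, the wedge equation holds precisely when there exists a scalar $\lambda$ such that
\[
\nabla_{\vec{u}}\vec{a}=\hat{L}\vec{u}+\lambda\,\vec{u}.
\]
So the remaining task is to show that the original conformal geodesic equation is exactly this relation with the particular coefficient $\lambda=-|\vec{a}|^2-\vec{u}\cdot\hat{L}\vec{u}$, and conversely that this coefficient is forced.

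To pin down $\lambda$ I would use the normalization $|\vec{u}|^2=1$, which gives $\vec{u}\cdot\vec{a}=\vec{u}\cdot\nabla_{\vec{u}}\vec{u}=0$ upon differentiating. Differentiating $\vec{u}\cdot\vec{a}=0$ once more along $\vec{u}$ yields $\vec{a}\cdot\vec{a}+\vec{u}\cdot\nabla_{\vec{u}}\vec{a}=0$, hence $\vec{u}\cdot\nabla_{\vec{u}}\vec{a}=-|\vec{a}|^2$. Taking the inner product of $\nabla_{\vec{u}}\vec{a}=\hat{L}\vec{u}+\lambda\vec{u}$ with $\vec{u}$ and using $|\vec{u}|^2=1$ then forces $\lambda=\vec{u}\cdot\nabla_{\vec{u}}\vec{a}-\vec{u}\cdot\hat{L}\vec{u}=-|\vec{a}|^2-\vec{u}\cdot\hat{L}\vec{u}$, recovering exactly the coefficient in the original equation. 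This shows the wedge equation together with $|\vec{u}|^2=1$ determines $\lambda$ uniquely and reproduces the conformal geodesic equation, and conversely the original equation clearly implies the wedge form.

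The step I expect to require the most care is the bookkeeping of the constraint: the wedge equation \eqref{eq:wedge-1} only determines $\nabla_{\vec{u}}\vec{a}$ up to a multiple of $\vec{u}$, so the equivalence genuinely relies on imposing $|\vec{u}|^2=1$ (and its differentiated consequences) to fix that multiple. I would make sure to state explicitly that both formulations carry the same normalization constraint, so that the scalar $\lambda$ is not free but pinned down by differentiating $|\vec{u}|^2=1$ twice. The algebra itself is routine, but the logical point—that the tangential component is recovered from the constraint rather than from the wedge equation—is the crux of the argument.
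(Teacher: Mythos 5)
Your proposal is correct and follows essentially the same route as the paper's proof: expand the wedge via the Leibniz rule, note that $\vec{a}\wedge\vec{a}=0$ reduces \eqref{eq:wedge-1} to $\nabla_{\vec{u}}\vec{a}=\hat{L}\vec{u}+\lambda\vec{u}$ for some scalar, and then use $|\vec{u}|^2=1$ (hence $\vec{u}\cdot\vec{a}=0$ and its derivative) to pin down $\lambda=-|\vec{a}|^2-\vec{u}\cdot\hat{L}\vec{u}$. Your explicit remark that the tangential component is recovered from the normalization constraint rather than from the wedge equation is exactly the point the paper's argument relies on.
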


\begin{proof}
Let us notice that
\begin{equation}
\nabla_{\vec{u}}(\vec{u}\wedge \vec{a})=\vec{a}\wedge \vec{a}+\vec{u}\wedge\nabla_{\vec{u}}\vec{a}=\vec{u}\wedge\nabla_{\vec{u}}\vec{a}.
\end{equation}
Thus, the equation \eqref{eq:wedge-1} is equivalent to
\begin{equation}
\nabla_{\vec{u}}\vec{a}=c\vec{u}+\hat{L}\vec{u},
\end{equation}
where $c(t)$ is some function.
Now, from constancy of norm of $\vec{u}$ we have $\vec{u}\cdot\vec{a}=0$,
\begin{equation}
0=\nabla_{\vec{u}}(\vec{u}\cdot\vec{a})=|\vec{a}|^2+\vec{u}\cdot \nabla_{\vec{u}}\vec{a}=|\vec{a}|^2+c+\vec{u}\cdot \hat{L}\vec{u},
\end{equation}
so $c=-|\vec{a}|^2-\vec{u}\cdot \hat{L}\vec{u}$ and the equation is equivalent to equation of conformal geodesics.
\end{proof}

We can recast the condition for unparametrized curves.

\begin{lm}\label{lm:uparam}
Unparametrized conformal geodesics equation is equivalent to
\begin{equation}
\nabla_{\vec{v}}\frac{\vec{v}\wedge \vec{b}}{|\vec{v}|^3}=\frac{\vec{v}\wedge \hat{L}\vec{v}}{|\vec{v}|},
\end{equation}
where $\vec{v}$ is velocity and $\vec{b}=\nabla_{\vec{v}}\vec{v}$.
\end{lm}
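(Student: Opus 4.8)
The plan is to show that the unparametrized equation is invariant under reparametrization and reduces to the proper-time equation \eqref{eq:wedge-1} when the parameter is proper time. Since an unparametrized conformal geodesic is defined as a curve admitting \emph{some} reparametrization making it a proper-time conformal geodesic, it suffices to prove two things: first, that the displayed equation is reparametrization-invariant (so it does not matter which parameter we use to write it down), and second, that under proper-time parametrization the displayed equation coincides with \eqref{eq:wedge-1}.

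First I would establish the second point, which fixes the normalization of the expression. If $t=s$ is proper time, then $|\vec{v}|=|\vec{u}|=1$, so the denominators $|\vec{v}|^3$ and $|\vec{v}|$ are both $1$, and $\vec{b}=\nabla_{\vec{v}}\vec{v}=\vec{a}$. The displayed equation then reads $\nabla_{\vec{u}}(\vec{u}\wedge\vec{a})=\vec{u}\wedge\hat{L}\vec{u}$, which is exactly \eqref{eq:wedge-1}. This shows the two equations agree in proper-time gauge.

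The main work is the reparametrization invariance. Let $t\mapsto\gamma(t)$ be the curve and write $s=s(t)$ for a reparametrization with $\sigma:=\frac{\rd s}{\rd t}>0$; denote proper-time quantities with $\vec{u},\vec{a}$ and general-parameter quantities with $\vec{v},\vec{b}$. The chain rule gives $\vec{v}=\sigma\,\vec{u}$, so $|\vec{v}|=\sigma$ (since $|\vec{u}|=1$), whence $\vec{u}=\vec{v}/|\vec{v}|$. I would then compute $\vec{b}=\nabla_{\vec{v}}\vec{v}=\nabla_{\sigma\vec{u}}(\sigma\vec{u})=\sigma\left(\dot\sigma\,\vec{u}+\sigma\,\vec{a}\right)$ where $\dot\sigma=\vec{u}(\sigma)$, using that $\nabla$ is a derivation and that $\sigma$ is a function along the curve. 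Taking the wedge kills the $\vec{u}\wedge\vec{u}$ term:
\begin{equation}
\vec{v}\wedge\vec{b}=(\sigma\vec{u})\wedge\sigma\left(\dot\sigma\,\vec{u}+\sigma\vec{a}\right)=\sigma^3\,\vec{u}\wedge\vec{a}.
\end{equation}
Hence $\dfrac{\vec{v}\wedge\vec{b}}{|\vec{v}|^3}=\dfrac{\sigma^3\,\vec{u}\wedge\vec{a}}{\sigma^3}=\vec{u}\wedge\vec{a}$, so the bivector on the left-hand side is a reparametrization invariant. Now I apply $\nabla_{\vec{v}}=\sigma\,\nabla_{\vec{u}}$ to it, obtaining $\nabla_{\vec{v}}\left(\frac{\vec{v}\wedge\vec{b}}{|\vec{v}|^3}\right)=\sigma\,\nabla_{\vec{u}}(\vec{u}\wedge\vec{a})$. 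On the right-hand side, $\vec{v}\wedge\hat{L}\vec{v}=\sigma^2\,\vec{u}\wedge\hat{L}\vec{u}$ (since $\hat{L}$ is linear and $\vec{v}=\sigma\vec{u}$), so $\dfrac{\vec{v}\wedge\hat{L}\vec{v}}{|\vec{v}|}=\dfrac{\sigma^2}{\sigma}\,\vec{u}\wedge\hat{L}\vec{u}=\sigma\,\vec{u}\wedge\hat{L}\vec{u}$. Both sides carry the same factor $\sigma$, so the displayed equation in the general parameter $t$ is equivalent to $\nabla_{\vec{u}}(\vec{u}\wedge\vec{a})=\vec{u}\wedge\hat{L}\vec{u}$, i.e. to \eqref{eq:wedge-1} in proper time.

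I expect the only delicate point to be bookkeeping the scalar factors of $\sigma$ and correctly handling $\nabla_{\vec{v}}(\sigma\vec{u})$, making sure $\sigma$ is differentiated as a function along the curve while $\nabla$ acts tensorially on $\vec{u}$; the cancellation of the $\vec{u}\wedge\vec{u}$ term in $\vec{v}\wedge\vec{b}$ is what makes the $\dot\sigma$ contribution drop out and renders the whole expression invariant. Combining the two directions, a curve satisfies the displayed equation in an arbitrary admissible parametrization if and only if its proper-time reparametrization satisfies \eqref{eq:wedge-1}, which by the previous lemma is exactly the conformal geodesic equation; this is the definition of an unparametrized conformal geodesic.
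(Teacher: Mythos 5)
Your proposal is correct and follows essentially the same route as the paper: computing $\vec{v}\wedge\vec{b}=|\vec{v}|^3\,\vec{u}\wedge\vec{a}$ so that $\frac{\vec{v}\wedge\vec{b}}{|\vec{v}|^3}$ is the proper-time bivector, then using $\nabla_{\vec{v}}=|\vec{v}|\nabla_{\vec{u}}$ and $\vec{v}\wedge\hat{L}\vec{v}=|\vec{v}|^2\,\vec{u}\wedge\hat{L}\vec{u}$ to match both sides up to the common factor $|\vec{v}|$. The only difference is presentational: you separate out the proper-time check and the invariance argument explicitly, which the paper leaves implicit.
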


\begin{proof}
Let us denote by $s$ the proper time and $|\vec{v}|=\frac{ds}{dt}$
\begin{equation}
\vec{v}=|\vec{v}|\vec{u},\quad \vec{b}=|\vec{v}|\left(\frac{d|\vec{v}|}{ds}\vec{u}+|\vec{v}|\vec{a}\right).
\end{equation}
The formulas simplify in the case of wedge product
\begin{equation}
\vec{v}\wedge \vec{b}=|\vec{v}|^3\vec{u}\wedge \vec{a}.
\end{equation}
Summing up
\begin{equation}
\nabla_{\vec{u}}(\vec{u}\wedge \vec{a})=|\vec{v}|^{-1}\nabla_{\vec{v}}\frac{\vec{v}\wedge \vec{b}}{|\vec{v}|^3}.
\end{equation}
Additionally, $\vec{v}\wedge \hat{L}\vec{v}=|\vec{v}|^2\vec{u}\wedge \hat{L}\vec{u}$, thus the result.
\end{proof}

\subsection{Spirals}\label{sec:spirals}

Consider two dimensional Euclidean space and polar coordinates $(r,\phi)$,
\begin{equation}
\rd s^2=\rd r^2+r^2\rd \phi^2.
\end{equation}
We introduce orthonormal basis of co-vectors $\check{e}_r=\rd r$, $\check{e}_\phi=r\rd \phi$ and the dual basis of vectors $\hat{e}_r, \hat{e}_\phi$.

We introduce some objects that will play a role in our example:
\begin{enumerate}
\item A symmetric two tensor (two co-vector)
\begin{equation}
M=2\check{e}_r\check{e}_\phi=2r\rd r\rd \phi.
\end{equation}
We remark that this object, in contrary to the volume form $r\rd r\wedge \rd \phi$, is not smooth at the origin.
\item A curve for $t\in (0,1]$ given in the polar coordinates by 
\begin{equation}\label{eq:curve}
r(t)=t,\ \phi(t)=e^{\frac{1}{t}}.
\end{equation}
One can show that after proper time reparametrization, this curve is a spiral that converges to the origin $r=0$ for $t\rightarrow 0_+$.
\end{enumerate}
For a function $f(t)$ on $\R_+$ we will write $f(t)=O(t^\infty)$ iff 
\begin{enumerate}
\item this function is smooth
\item it extends smoothly by zero to the negative numbers. Namely, the limits of all derivatives vanishes at $0$.
\end{enumerate}
Importantly, if $f(t)=O(t^\infty)$ then also $t^{-1}f(t)=O(t^\infty)$ and $\frac{df}{dt}=O(t^\infty)$.

\begin{lm}\label{lm:curve}
There exists a smooth function $k(t)=O(t^\infty)$  such that for the curve \eqref{eq:curve} in Euclidean space
\begin{equation}
\nabla_{\vec{v}}\frac{\vec{v}\wedge \vec{b}}{|\vec{v}|^3}=k(t) \frac{\vec{v}\wedge \hat{M}\vec{v}}{|\vec{v}|}.
\end{equation}
Moreover, the length of the curve is infinite $\int_0^1dt\ |\vec{v}|=\infty$.
\end{lm}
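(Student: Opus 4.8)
The plan is to exploit the two-dimensionality to collapse the vector identity to a single scalar relation, read off $k$ explicitly, and then control its asymptotics. First I would fix the parallel area bivector $\Omega=\hat e_r\wedge\hat e_\phi$; in two dimensions every bivector is a multiple of $\Omega$, and $\Omega$ is covariantly constant, so $\nabla_{\vec v}(f\Omega)=\dot f\,\Omega$ for any scalar $f(t)$. Writing the velocity in the orthonormal frame as $\vec v=\hat e_r-\tfrac{1}{t}e^{1/t}\hat e_\phi$ (from $r=t$, $\phi=e^{1/t}$) and computing $\vec b=\nabla_{\vec v}\vec v$ through the polar Christoffel symbols, one gets $\vec v\wedge\vec b=W(t)\,\Omega$ and hence $\frac{\vec v\wedge\vec b}{|\vec v|^3}=\kappa(t)\,\Omega$, where $\kappa=W/|\vec v|^3$ is precisely the signed curvature of the curve. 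Since $\hat M$ merely interchanges the two orthonormal components, $\vec v\wedge\hat M\vec v=\big((v^{\hat r})^2-(v^{\hat\phi})^2\big)\Omega=(1-t^{-2}e^{2/t})\Omega$. The full identity therefore reduces to the scalar relation $\dot\kappa=k\,\frac{1-t^{-2}e^{2/t}}{|\vec v|}$, which I simply solve for $k$.

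This already settles most of the statement for free. The coefficient $1-t^{-2}e^{2/t}$ is strictly negative on $(0,1]$ because $e^{1/t}>t$ there, so it never vanishes and $k=\frac{|\vec v|\,\dot\kappa}{1-t^{-2}e^{2/t}}$ is a well-defined smooth function on $(0,1]$; this is the existence part. The infinite-length claim is equally immediate, since $|\vec v|=\sqrt{1+t^{-2}e^{2/t}}>t^{-1}$ and $\int_0^1 t^{-1}\,dt=\infty$.

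The real content is that $k=O(t^\infty)$. The mechanism is that the prefactor $\frac{|\vec v|}{1-t^{-2}e^{2/t}}$ carries an overall factor $-t\,e^{-1/t}$ that beats the polynomial-in-$1/t$ growth of $\dot\kappa$. To make this precise I would set $h=e^{-1/t}$ (the prototypical $O(t^\infty)$ function, with $\dot h=t^{-2}h$) and $\sigma=(1+t^2h^2)^{1/2}$, and record the closed forms $\kappa=(h^2-t^{-1})\sigma^{-3}$ and $\frac{|\vec v|}{1-t^{-2}e^{2/t}}=-h\,\frac{t\sigma}{1-t^2h^2}$. Then I would introduce the $\R$-algebra $\mathcal B$ generated by $t,\ t^{-1},\ h,\ \sigma^{\pm1},\ (1-t^2h^2)^{-1}$ and check two properties: (i) $\mathcal B$ is closed under $d/dt$, since each generator differentiates back into $\mathcal B$, e.g. $\dot\sigma=(t+1)h^2\sigma^{-1}$ and $\frac{d}{dt}(1-t^2h^2)^{-1}=2(t+1)h^2(1-t^2h^2)^{-2}$; and (ii) every element of $\mathcal B$ is polynomially bounded in $1/t$ as $t\to0^+$, because $h\to0$, $\sigma\to1$, $(1-t^2h^2)^{-1}\to1$ are all bounded and only $t^{-1}$ contributes growth. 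Since $\kappa\in\mathcal B$, closure gives $\dot\kappa\in\mathcal B$, whence $k=h\cdot g$ with $g=-\frac{t\sigma}{1-t^2h^2}\dot\kappa\in\mathcal B$.

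Finally I would invoke the standard fact that $e^{-1/t}$ times any function that is polynomially bounded in $1/t$ along with all its derivatives is $O(t^\infty)$: differentiating $h\cdot g$ by Leibniz preserves the factor $h=e^{-1/t}$ in every term (because $\dot h=t^{-2}h$), while the remaining factors stay in $\mathcal B$ and hence poly-bounded by (i)–(ii), so $k^{(j)}(t)\to0$ as $t\to0^+$ for every $j$. This is exactly the paper's definition of $O(t^\infty)$, completing the proof. The main obstacle is precisely this asymptotic step — not the existence of $k$, which is automatic in two dimensions, but the verification that the algebraic square-root quantities $\sigma^{\pm1}$ and $(1-t^2h^2)^{-1}$ neither spoil closure under differentiation nor grow; the algebra $\mathcal B$ is the bookkeeping device that controls this cleanly.
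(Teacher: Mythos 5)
Your proposal is correct and follows essentially the same route as the paper: work in the orthonormal polar frame, use the covariantly constant bivector $\hat e_r\wedge\hat e_\phi$ to collapse the equation to a single scalar relation, solve for $k$, and observe that the overall factor $e^{-1/t}$ beats the polynomial growth in $1/t$, while the length bound $|\vec v|>t^{-1}$ is the same. The only difference is bookkeeping for the $O(t^\infty)$ step --- the paper tracks multiplicative $1+O(t^\infty)$ corrections to read off $k=-\tfrac{1}{t^3f}(1+O(t^\infty))$, whereas you keep exact closed forms and control all derivatives via a differentiation-closed algebra, which is a slightly more explicit but equivalent argument.
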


\begin{proof}
Let us denote $f(t)=\frac{1}{t^2}e^{\frac{1}{t}}$. Importantly $\frac{1}{f}=O(t^\infty)$ so by differentiating $\frac{\dot{f}}{f^2}=O(t^\infty)$. Additionally for any $n$, $\frac{1}{t^nf}=O(t^\infty)$. 

We use formulas for velocity and acceleration in polar coordinates
\begin{equation}
\vec{v}=\dot{r}\hat{e}_r+r\dot{\phi}\hat{e}_\phi,\quad \vec{b}=\left(\ddot{r}-r \dot{\phi}^2\right) \hat{e}_r+(r \ddot{\phi}+2 \dot{r} \dot{\phi}) \hat{e}_\phi.
\end{equation}
Let us notice that
\begin{equation}
r=t,\ \dot{r}=1,\ \ddot{r}=0,\quad \dot{\phi}=-f,\ \ddot{\phi}=-\dot{f}.
\end{equation}
This allows us to write
\begin{equation}
\vec{v}=-tf(\hat{e}_\phi+O(t^\infty)\hat{e}_r),\quad |\vec{v}|=tf(1+O(t^\infty)).
\end{equation}
Similarly for the acceleration the dominant term comes from the centrifugal part
\begin{equation}
\vec{b}=-tf^2(\hat{e}_r+O(t^\infty)\hat{e}_r+O(t^\infty)\hat{e}_\phi).
\end{equation}
We introduce a natural invariant preserved by covariant derivative
\begin{equation}
\mu:=\hat{e}_r\wedge \hat{e}_\phi.
\end{equation}
We can now compute
\begin{equation}
\vec{v}\wedge \vec{b}=-t^2f^3(\hat{e}_r\wedge \hat{e}_\phi+O(t^\infty)\hat{e}_r\wedge \hat{e}_\phi)=-t^2f^3(1+O(t^\infty))\mu.
\end{equation}
Together with the formula for the norm of velocity and covariant constancy of $\mu$
\begin{equation}\label{eq:eq1}
\nabla_{\vec{v}}\frac{\vec{v}\wedge \vec{b}}{|\vec{v}|^3}=\left(\frac{d}{dt}\frac{-t^2f^3(1+O(t^\infty))}{t^3f^3(1+O(t^\infty))}\right)\mu=\left(\frac{d}{dt}\left(-\frac{1}{t}+O(t^\infty)\right)\right)\mu=\left(\frac{1}{t^2}+O(t^\infty)\right)\mu.
\end{equation}
Similarly,
\begin{equation}
\hat{M}\vec{v}=-tf(\hat{e}_r+O(t^\infty)\hat{e}_\phi)\Longrightarrow \vec{v}\wedge \hat{M}\vec{v}=-t^2f^2(1+O(t^\infty))\mu,
\end{equation}
so
\begin{equation}\label{eq:eq2}
\frac{\vec{v}\wedge \hat{M}\vec{v}}{|\vec{v}|}=-tf(1+O(t^\infty))\mu.
\end{equation}
Comparing both expressions \eqref{eq:eq1} and \eqref{eq:eq2} we see that $k(t)=-\frac{1}{t^3f}(1+O(t^\infty))=O(t^\infty)$.

Finally $|\vec{v}|>tf$ thus as $e^{\frac{1}{t}}\geq 1$ for $t>0$
\begin{equation}
\int_0^1dt\ |\vec{v}|>\int_0^1dt\ tf\geq \int_0^1dt\ \frac{1}{t}=\infty.
\end{equation}
This shows that the length is infinite.
\end{proof}

\begin{cor}\label{cor:spiral}
After proper time reparametrization, the curve \eqref{eq:curve} is a spiral that converges to the origin $r=0$ for $t\rightarrow 0_+$.
\end{cor}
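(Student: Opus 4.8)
The plan is to read off the statement directly from Lemma \ref{lm:curve}, which already supplies the two ingredients we need: that the curve is regular and that it has infinite length.

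First I would set up the proper time. Writing $\vec{v}=\dot r\,\hat{e}_r+r\dot\phi\,\hat{e}_\phi$ with $\dot r=1$ gives $|\vec{v}|^2=\dot r^2+r^2\dot\phi^2\ge 1$, so the speed never vanishes on $(0,1]$ and the curve is an immersion. I would then define the proper time measured from $t=1$ toward the origin by
\[
s(t)=\int_t^1 |\vec{v}(\tau)|\,d\tau ,
\]
so that $s(1)=0$ and $\frac{ds}{dt}=-|\vec{v}(t)|<0$. Hence $s$ is a strictly decreasing smooth function of $t$, a diffeomorphism of $(0,1]$ onto its image. By the length computation in Lemma \ref{lm:curve}, $\int_0^1 |\vec{v}|\,dt=\infty$, so $s(t)\to\infty$ as $t\to 0_+$; thus $s$ maps $(0,1]$ onto $[0,\infty)\supset\R_+$. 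Inverting yields a parametrization $s\mapsto\gamma(s)$ with unit speed (a proper time parametrization) for which $t=t(s)$ is strictly decreasing with $t(s)\to 0_+$ as $s\to\infty$.

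Next I would check the spiral condition against the definition. Since $r(t)=t$, along the reparametrized curve $r=t(s)\to 0$ as $s\to\infty$, so the curve converges to the origin. For the formal requirement, let $U$ be any open neighbourhood of the origin and choose $\varepsilon>0$ with $\{r<\varepsilon\}\subset U$; because $t(s)\to 0$ there is $s_0$ with $t(s)<\varepsilon$ for all $s>s_0$, and then $r(\gamma(s))=t(s)<\varepsilon$ gives $\gamma(s)\in U$, which is exactly the defining property of a spiral to the origin.

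All the real work is done in the lemma; the only point needing care is the passage from infinite \emph{length} to infinite \emph{proper time}, which is immediate since proper time is arc length, together with fixing the orientation so that proper time increases as the curve approaches the origin ($t\to 0_+$). I do not expect any genuine obstacle here.
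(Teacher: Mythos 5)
Your proposal is correct and follows exactly the argument the paper intends: the corollary is stated as an immediate consequence of Lemma \ref{lm:curve}, with the infinite-length computation supplying the fact that proper time (arc length, oriented toward the origin) tends to infinity as $t\to 0_+$, while $r(t)=t\to 0$ gives convergence to the origin. The only detail worth spelling out, which you do, is the passage from infinite length to the proper-time domain containing $\R_+$; there is no gap.
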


\subsection{A spiraling conformal geodesic}\label{sec:metric}

We will now define a metric for which our spiral is a conformal geodesic. The idea is to assume that $\{z=0\}$ is a totally geodesic surface with flat metric, but the higher order terms in $z$ allow us to introduce non-trivial Schouten tensor which will play a role of $M(t)$ in our example. Terms of order $O(z^4)$ are introduced only to make our metric well-define on the whole $\R^3$.

\begin{lm}\label{lm:Ricci}
For every function $h(r)=O(r^\infty)$,  a metric on $\R^3$ given in cylindrical coordinates $(x,y,z)=(r\cos\phi,r\sin\phi,z)$ by
\begin{equation}
\rd s^2=\left(1+h(r)^2z^4\right)\left(\rd r^2+r^2\rd \phi^2\right)+4h(r)z^2r\rd r\rd \phi+\rd z^2
\end{equation}
is a smooth Riemannian metric. Moreover,
\begin{enumerate}
\item Metric restricted to $\Sigma=\{z=0\}$ is $\rd x^2+\rd y^2=\rd r^2+r^2\rd \phi^2$ and extrinsic curvature of $\Sigma$ vanishes,
\item The Ricci tensor $R$ at $z=0$ satisfies
\begin{equation}
R|_{\Sigma}=-4h(r)r\rd r\rd \phi=-2h(r)M.
\end{equation}
\end{enumerate}
\end{lm}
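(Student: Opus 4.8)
The plan is to treat the three assertions in increasing order of difficulty: positive-definiteness and smoothness first, then the two geometric statements. For positive-definiteness I would pass to the orthonormal coframe $\check e_r=\rd r$, $\check e_\phi=r\,\rd\phi$ of the flat plane, in which the tangential part of the metric becomes $(1+h^2z^4)(\check e_r^2+\check e_\phi^2)+4hz^2\,\check e_r\check e_\phi$ while $\rd z^2$ decouples. The $2\times 2$ block $\left(\begin{smallmatrix}1+h^2z^4 & 2hz^2\\ 2hz^2 & 1+h^2z^4\end{smallmatrix}\right)$ has eigenvalues $1+h^2z^4\pm 2hz^2=(1\pm h(r)z^2)^2$, so together with the eigenvalue $1$ coming from $\rd z^2$ the metric is positive semidefinite everywhere and positive definite wherever $h(r)z^2\neq\pm1$, in particular on a neighbourhood of $\Sigma$. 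This also makes transparent the role of the $O(z^4)$ term advertised in the text: it is exactly what turns the eigenvalues into perfect squares and prevents them from going negative.

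Smoothness is the first genuine point. Cylindrical coordinates degenerate along the axis $r=0$, and the symmetric tensor $r\,\rd r\,\rd\phi=\check e_r\check e_\phi$ is itself not continuous there—written in Cartesian coordinates its components are the bounded but discontinuous functions $(x^2-y^2)/r^2$ and $xy/r^2$. I would rewrite the whole metric in the coordinates $(x,y,z)$: the conformal factor multiplies $\rd x^2+\rd y^2$, and the cross term becomes $4z^2\,\tfrac{h(r)}{r^2}\big[(x^2-y^2)\,\rd x\,\rd y+xy(\rd y^2-\rd x^2)\big]$. The decisive observation is that $h(r)=O(r^\infty)$ forces $h(r)/r^2$ (and $h(r)^2$) to be again flat at the origin, and a radial function that is flat at $0$ composes with $r=\sqrt{x^2+y^2}$ to a genuinely smooth function on $\R^2$ with all derivatives vanishing at the origin. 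Hence the factor $h(r)/r^2$ kills the $1/r^2$ blow-up and every Cartesian component of the metric is smooth; this is the essential use of the hypothesis $h=O(r^\infty)$, and I expect it to be the main obstacle.

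For part 1, setting $z=0$ immediately gives the induced metric $\rd r^2+r^2\rd\phi^2=\rd x^2+\rd y^2$. Since $g_{zz}=1$, $g_{rz}=g_{\phi z}=0$ and all tangential components depend on $z$ only through $z^2$ or $z^4$, the map $z\mapsto -z$ is an isometry fixing $\Sigma$ pointwise; the fixed-point set of an isometric involution is totally geodesic, so the extrinsic curvature vanishes. Equivalently, $\partial_z g_{ab}\big|_{z=0}=0$ for $a,b\in\{r,\phi\}$ by direct inspection.

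For part 2 I would exploit that, because $g_{zz}=1$ and $g_{az}=0$, the coordinate $z$ is a Gaussian normal coordinate for $\Sigma$ (one checks $\Gamma^\mu_{zz}=0$), so the curvature along $\Sigma$ is controlled by the radial, Gauss and Codazzi equations. At $z=0$ the second fundamental form and its tangential derivatives vanish, so the Gauss equation gives vanishing tangential Riemann (the slice is flat), the Codazzi equation kills the mixed components $R_{zabc}$, and only the radial equation survives, reducing to $R_{zazb}|_{\Sigma}=-\tfrac12\,\partial_z^2 g_{ab}|_{z=0}$. A one-line computation gives $\partial_z^2 g_{rr}|_0=\partial_z^2 g_{\phi\phi}|_0=0$ and $\partial_z^2 g_{r\phi}|_0=4h(r)r$, so the only nonzero radial curvature is $R_{zrz\phi}|_\Sigma=-2h(r)r$. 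Contracting with the flat inverse metric, every Ricci component vanishes except $R_{r\phi}|_\Sigma=g^{zz}R_{zrz\phi}=-2h(r)r$, which reassembles into $R|_\Sigma=-4h(r)\,r\,\rd r\,\rd\phi=-2h(r)M$, as claimed.
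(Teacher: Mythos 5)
Your proof is correct, and for smoothness and part 1 it is essentially the paper's argument: you both reduce the cross term to $z^2\,\tfrac{h(r)}{r^2}\,r^3\rd r\rd\phi$ with $r^3\rd r\rd\phi=(x\rd x+y\rd y)(x\rd y-y\rd x)$ smooth in Cartesian coordinates and $h/r^2$, $h^2$ flat at the origin, and you both get the totally geodesic property from the absence of terms linear in $z$. Two points of comparison are worth recording. First, for positive-definiteness the paper writes the metric as $(\check e_r+hz^2\check e_\phi)^2+(\check e_\phi+hz^2\check e_r)^2+\rd z^2$; your eigenvalue computation $(1\pm hz^2)^2$ is the diagonalization of the same $2\times2$ block, but you are more careful in observing that this only gives positive \emph{semi}definiteness in general, with degeneracy on the locus $h(r)z^2=\pm1$ --- a caveat the paper's proof glosses over (harmless in the application, where $h$ is small and only a neighbourhood of $\Sigma$ matters, but it means the lemma as literally stated needs $|h(r)|z^2<1$ or an extra cutoff). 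Second, for the Ricci tensor the paper computes the full Riemann tensor directly in Cartesian coordinates (where all first derivatives of $g$ vanish on $\Sigma$ because $g-\delta=O(z^2)$), obtaining $\mathrm{Riem}=-2\rd z^2\owedge h(r)M$ and tracing; you instead invoke the Gauss, Codazzi and radial equations in the Gaussian normal coordinate $z$. Both routes reduce to the same identity $R_{zazb}|_\Sigma=-\tfrac12\partial_z^2g_{ab}|_{z=0}$ once $K|_\Sigma=0$, so the difference is presentational: the paper's Kulkarni--Nomizu form packages the whole curvature at once, while your argument makes transparent \emph{why} only the normal--normal components survive without computing the tensor explicitly. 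Your sign and component bookkeeping ($\partial_z^2 g_{r\phi}|_0=4hr$, hence $R_{r\phi}|_\Sigma=-2hr$, i.e.\ $R|_\Sigma=-2hM$) agrees with the paper.
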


\begin{proof}
Indeed,
\begin{equation}
r^3\rd r\rd \phi=(x\rd x+y\rd y)(x\rd y-y\rd x),\quad \rd r^2+r^2\rd \phi^2=\rd x^2+\rd y^2
\end{equation}
are smooth. Moreover, 
\begin{equation}
\frac{h(r)}{r^2}, \quad h(r)^2
\end{equation}
are smooth functions on $\R^2$ because they vanishes to all orders at the origin which would be the only troublesome point. Thus,
\begin{equation}
z^2hr\rd r\rd \phi=z^2\frac{h}{r^2}r^3\rd r\rd \phi,\quad (1+h^2z^4)\left(\rd r^2+r^2\rd \phi^2\right)
\end{equation}
are smooth and the metric is smooth.

The metric can be written as
\begin{equation}
\rd s^2=\left(\check{e}_r+hz^2\check{e}_\phi\right)^2+\left(\check{e}_\phi+hz^2\check{e}_r\right)^2+\rd z^2,
\end{equation}
thus it is Riemannian for $r\not=0$. At $r=0$ the metric is $\rd x^2+\rd y^2+\rd z^2$ in Cartesian coordinates thus it is also non-degenerate.

Let us notice that
\begin{equation}
\rd s^2=\rd x^2+\rd y^2+\rd z^2+2z^2 h(r)M+O(z^3),
\end{equation}
thus the claim about the induced metric and extrinsic curvature. The Riemann tensor can be directly computed as in the Cartesian coordinates the first derivatives of the metric vanish at $\Sigma$.
\begin{equation}
Riem=-2\rd z^2 \owedge h(r)M,
\end{equation}
where $\owedge$ is Kulkarni-Nomizu product. In particular as trace of $M$ vanishes and every contraction of $M$ with $\rd z^2$ also vanish we obtain the following formula for Ricci
\begin{equation}
\hat{R}=-2h(r)\hat{M}.
\end{equation}
That finishes the proof.
\end{proof}

\begin{prop}
Consider a smooth metric on $\R^3$ such that in cylindrical coordinates $(x,y,z)=(r\cos\phi,r\sin\phi,z)$
\begin{equation}
\rd s^2=\left(1+h(r)^2z^4\right)\left(\rd r^2+r^2\rd \phi^2\right)+4h(r)z^2r\rd r\rd \phi+\rd z^2
\end{equation}
where $h(r)=-\frac{1}{2}k(r)$ and $k(r)$ is the function from Lemma
\ref{lm:curve}. Then the curve \eqref{eq:curve} (in plane $z=0$) is a spiraling conformal geodesic (after proper time reparametrization).
\end{prop}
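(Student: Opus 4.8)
The plan is to verify the unparametrized conformal geodesic equation of Lemma \ref{lm:uparam} directly for the curve \eqref{eq:curve}, which lies in the surface $\Sigma=\{z=0\}$. The whole point of the construction is that every ingredient of that equation, \emph{except} the Schouten term, can be computed intrinsically on $\Sigma$ with its flat induced metric, and there Lemma \ref{lm:curve} has already done the work; the Schouten term then supplies exactly the required right-hand side.

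First I would exploit that $\Sigma$ is totally geodesic with flat induced metric (part 1 of Lemma \ref{lm:Ricci}). Since the curve lies in $\Sigma$ and its velocity $\vec{v}$ is tangent to $\Sigma$, the Gauss formula with vanishing second fundamental form shows that the ambient Levi-Civita connection $\nabla$, restricted to $\Sigma$-tangent fields along the curve, coincides with the intrinsic one. Hence $\vec{b}=\nabla_{\vec{v}}\vec{v}$ is tangent to $\Sigma$, the bivector $\vec{v}\wedge\vec{b}$ lies in $\Lambda^2 T\Sigma$, and by the Leibniz rule (using that the second fundamental form vanishes at each stage) the ambient expression $\nabla_{\vec{v}}\frac{\vec{v}\wedge\vec{b}}{|\vec{v}|^3}$ equals the purely two-dimensional one computed in the flat metric $\rd r^2+r^2\rd\phi^2$. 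By Lemma \ref{lm:curve} this is $k(t)\frac{\vec{v}\wedge\hat{M}\vec{v}}{|\vec{v}|}$. I expect this reduction --- checking that covariant derivatives of the \emph{tensor} $\vec{v}\wedge\vec{b}$, and not merely of vectors, descend to the intrinsic connection --- to be the step needing the most care, though it is routine once total geodesy is invoked.

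Next I would compute the Schouten term. In dimension three $L_{\mu\nu}=R_{\mu\nu}-\tfrac14 R\, g_{\mu\nu}$, so the pure-trace piece contributes a multiple of $\vec{v}$ to $\hat{L}\vec{v}$ and is therefore annihilated by $\vec{v}\wedge(\cdot)$; thus $\vec{v}\wedge\hat{L}\vec{v}=\vec{v}\wedge\hat{R}\vec{v}$ (and in fact $R=0$ on $\Sigma$, since $\tr M=0$). Part 2 of Lemma \ref{lm:Ricci} gives $\hat{R}=-2h(r)\hat{M}$ on $\Sigma$, and because $M$ carries no $z$-component, $\hat{M}\vec{v}$ remains tangent. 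With the choice $h(r)=-\tfrac12 k(r)$ and $r=t$ along the curve this yields $\vec{v}\wedge\hat{L}\vec{v}=k(t)\,\vec{v}\wedge\hat{M}\vec{v}$, hence
\[
\frac{\vec{v}\wedge\hat{L}\vec{v}}{|\vec{v}|}=k(t)\frac{\vec{v}\wedge\hat{M}\vec{v}}{|\vec{v}|}.
\]

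Comparing the two computations, the curve satisfies $\nabla_{\vec{v}}\frac{\vec{v}\wedge\vec{b}}{|\vec{v}|^3}=\frac{\vec{v}\wedge\hat{L}\vec{v}}{|\vec{v}|}$, which by Lemma \ref{lm:uparam} is precisely the unparametrized conformal geodesic equation. Finally, Corollary \ref{cor:spiral} shows that after proper time reparametrization the curve is a spiral converging to the origin, so it is a spiraling conformal geodesic, completing the argument.
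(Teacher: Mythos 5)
Your proposal is correct and follows essentially the same route as the paper: reduce the left-hand side of the unparametrized equation of Lemma \ref{lm:uparam} to the intrinsic flat computation of Lemma \ref{lm:curve} via total geodesy of $\Sigma$, then identify $\vec{v}\wedge\hat{L}\vec{v}$ with $\vec{v}\wedge\hat{R}\vec{v}=k(r)\,\vec{v}\wedge\hat{M}\vec{v}$ using Lemma \ref{lm:Ricci} and the fact that in dimension three the Schouten and Ricci tensors differ by a multiple of the metric. You are in fact slightly more explicit than the paper about why the ambient covariant derivative of the bivector $\vec{v}\wedge\vec{b}$ descends to the intrinsic one, which the paper leaves implicit.
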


\begin{proof}
Let us remind that by Lemma \ref{lm:Ricci} the metric is smooth, the plane $z=0$ is totally geodesic and its induced metric is flat. Thus, by Lemma \ref{lm:curve}
\begin{equation}
\nabla_{\vec{v}}\frac{\vec{v}\wedge \vec{b}}{|\vec{v}|^3}=k(t) \frac{\vec{v}\wedge \hat{M}\vec{v}}{|\vec{v}|}.
\end{equation}
Let us notice that Ricci and Schouten tensors differ in dimension $3$ by a term proportional to the metric, thus
\begin{equation}
\vec{v}\wedge \hat{L}\vec{v}=\vec{v}\wedge \hat{R}\vec{v}.
\end{equation}
Moreover, by Lemma \ref{lm:Ricci}
\begin{equation}
\hat{R}\vec{v}=-2h(r)\hat{M}\vec{v}=k(r)\hat{M}\vec{v}
\end{equation}
Additionally, on the curve $r=t$ thus
\begin{equation}
k(t) \frac{\vec{v}\wedge \hat{M}\vec{v}}{|\vec{v}|}=\frac{\vec{v}\wedge k(r)\hat{M}\vec{v}}{|\vec{v}|}=\frac{\vec{v}\wedge \hat{R}\vec{v}}{|\vec{v}|}=\frac{\vec{v}\wedge \hat{L}\vec{v}}{|\vec{v}|}.
\end{equation}
and the curve is an unparametrized conformal geodesic by Lemma \ref{lm:uparam}. After proper time reparametrization, it is a spiral by Corollary \ref{cor:spiral}.
\end{proof}

\section{Summary and outlook}

We constructed an example of a spiraling conformal geodesic, proving that this kind of singular behavior is possible for these curves. However, there are several classes of metrics for which spirals cannot occur. The most important among them are Einstein metrics  \cite{tod2012some} and gravitational instantons \cite{dunajski2022conformal}. As there are both examples of metrics with spirals and metrics in which spirals are excluded, the natural question is whether spiraling conformal geodesics are generic and if not whether there is a effective criterium (for example in terms of non-vanishing of some conformal invariant) for excluding conformal spirals to a given point. We should stress that these questions can be also asked in other than Riemannian signatures (most importantly in Lorentzian signature) and in these cases even less is known.

\section*{Acknowledgements}

We would like to thank Peter Cameron, Maciej Dunajski and Paul Tod for comments and discussion about the issue of spiraling conformal geodesics.

\bibliographystyle{ieeetr}

\begin{thebibliography}{10}

\bibitem{yano2020theory}
K.~Yano, {\em The theory of Lie derivatives and its applications}.
\newblock Courier Dover Publications, 2020.

\bibitem{schmidt1974new}
B.~G. Schmidt, ``A new definition of conformal and projective infinity of
  space-times,'' {\em Communications in Mathematical Physics}, vol.~36, no.~1,
  pp.~73--90, 1974.

\bibitem{friedrich1987conformal}
H.~Friedrich and B.~G. Schmidt, ``Conformal geodesics in general relativity,''
  {\em Proceedings of the Royal Society of London. A. Mathematical and Physical
  Sciences}, vol.~414, no.~1846, pp.~171--195, 1987.

\bibitem{bailey1990conformal}
T.~N. Bailey and M.~G. Eastwood, ``Conformal circles and parametrizations of
  curves in conformal manifolds,'' {\em Proceedings of the American
  Mathematical Society}, vol.~108, no.~1, pp.~215--221, 1990.

\bibitem{bailey1994thomas}
T.~N. Bailey, M.~G. Eastwood, and A.~R. Gover, ``Thomas's structure bundle for
  conformal, projective and related structures,'' {\em The Rocky Mountain
  Journal of Mathematics}, vol.~24, no.~4, pp.~1191--1217, 1994.

\bibitem{gover2018distinguished}
A.~R. Gover, D.~Snell, and A.~Taghavi-Chabert, ``Distinguished curves and
  integrability in riemannian, conformal, and projective geometry,'' {\em arXiv
  preprint arXiv:1806.09830}, 2018.

\bibitem{kruglikov2024variationality}
B.~Kruglikov, V.~S. Matveev, and W.~Steneker, ``Variationality of conformal
  geodesics in dimension 3,'' {\em arXiv preprint arXiv:2412.04890}, 2024.

\bibitem{kruglikov2025conformal}
B.~Kruglikov, ``Conformal geodesics are not variational in higher dimensions,''
  {\em arXiv preprint arXiv:2505.06739}, 2025.

\bibitem{dunajski2021variational}
M.~Dunajski and W.~Kry{\'n}ski, ``Variational principles for conformal
  geodesics,'' {\em Letters in Mathematical Physics}, vol.~111, no.~5, p.~127,
  2021.

\bibitem{cameron2024conformal}
P.~Cameron, M.~Dunajski, and P.~Tod, ``Conformal geodesics cannot spiral,''
  {\em Journal of Differential Geometry}, vol.~128, no.~2, pp.~557--581, 2024.

\bibitem{tod2012some}
P.~Tod, ``Some examples of the behaviour of conformal geodesics,'' {\em Journal
  of Geometry and Physics}, vol.~62, no.~8, pp.~1778--1792, 2012.

\bibitem{dunajski2022conformal}
M.~Dunajski and P.~Tod, ``Conformal geodesics on gravitational instantons,'' in
  {\em Mathematical Proceedings of the Cambridge Philosophical Society},
  vol.~173, pp.~123--154, Cambridge University Press, 2022.

\bibitem{cameron2024conformal-erratum}
P.~Cameron, M.~Dunajski, and P.~Tod, ``Conformal geodesics cannot spiral -
  erratum,'' Sep 2025.
\newblock arxiv.

\end{thebibliography}

\end{document}